\title{On the Erd\H{o}s-Gy\'{a}rf\'{a}s
Conjecture in Claw-free Graphs}
\keywords{Erd\H{o}s-Gy\'{a}rf\'{a}s Conjecture, Claw-Free Graphs,
Cycles}
\begin{document}

\begin{abstract}
The Erd\H{o}s-Gy\'{a}rf\'{a}s conjecture states that every graph
with minimum degree at least three has a cycle whose length is a
power of 2. Since this conjecture has proven to be far from reach,
Hobbs asked if the Erd\H{o}s-Gy\'{a}rf\'{a}s conjecture holds in
claw-free graphs. In this paper, we obtain some results on this
question, in particular for cubic claw-free graphs.
\end{abstract}

\section{Introduction}

All graphs in this paper are assumed to be simple, that is,
without any loops and multiple edges. Let us first recall here
briefly some notation and terminology we will need in this paper.
We denote by $\delta=\delta(G)$ the minimum degree of the the
vertices in the graph $G=(V,E)$. A {\it $uv$-path\/} is a path
having the vertices $u$ and $v$ as its ends. The length of a path
$P$ (or a cycle $C$) is denoted by $l(P)$ (resp. $l(C)$). Also, we
denote the distance between the vertices $u$ and $v$ by $d(u, v)$,
that is the length of a shortest $uv$-path. A graph that does not
contain a particular graph $H$ as an induced subgraph is called
{\it $H$-free\/}. The complete bipartite graph $K_{1,3}$ is
referred to as a {\it claw\/}; so a graph is called {\it
claw-free\/} if it does not have $K_{1,3}$ as an induced subgraph.
A {\it triangle\/} is a cycle of length three. A {\it chord\/} of
a cycle $C$ is an edge between two vertices of $C$ which are not
adjacent in $C$. By a {\it hole\/} we mean a chordless cycle of
length at least four. A hole of length $n$ is called an $n$-hole.

\vspace{1mm}

Several questions on cycles in graphs have been posed by Erd\H{o}s
and his colleagues (see, e.g., \cite{BON}). In particular, in 1995
Erd\H{o}s and Gy\'{a}rf\'{a}s \cite{ERD} asked:

\vspace{1mm}

{\it If $G$ is a graph with minimum degree at least three, does
$G$ have a cycle whose length is a power of \normalfont{2}?\/}

\vspace{1mm}

This is known as the {\it Erd\H{o}s-Gy\'{a}rf\'{a}s conjecture\/}.
In fact, Erd\H{o}s and Gy\'{a}rf\'{a}s \cite{ERD} said that ``we
are convinced now that this is false and no doubt there are graphs
for every $r$ every vertex of which has degree $\geq r$ and which
contain no cycle of length $2^{k}$, but we never found a
counterexample even for $r = 3$".

There seems to be very little published on the
Erd\H{o}s-Gy\'{a}rf\'{a}s conjecture. Markstr\"{o}m \cite{MAR}
(via computer searches) asserted that any cubic counterexample
must have at least 30 vertices. Salehi Nowbandegani and Esfandiari
\cite{SAL} prove that any bipartite counterexample must have at
least 32 vertices.

More generally, Erd\H{o}s asked does there exist an integer
sequence $a_1, a_2, a_3, \cdots $ with zero density, and a
constant $c$ such that every graph with average degree at least
$c$ contains a cycle of length $a_i$ for some $i$. This question
is answered affirmatively by Verstra\"{e}te \cite{VER}.

 Hobbs asked if the Erd\H{o}s-Gy\'{a}rf\'{a}s conjecture holds in
claw-free graphs \cite{DASH}. Shauger \cite{SHA} proved the
conjecture for $K_{1,m}$-free graphs having minimum degree at
least $m+1$ or maximum degree at least $2m-1$. Also, Daniel and
Shauger \cite{DASH} proved it for planar claw-free graphs. In this
paper, we investigate claw-free graphs with $\delta \geq$4 and
cubic claw-free graphs.

\section{Two-power Cycle Lengths in Claw-free Graphs}
Our first theorem concerns claw-free graphs with $\delta \geq 3$.

\begin{theorem}
Suppose that $G$ is a claw-free graph with $\delta \geq 3$. Then
$G$ has a cycle whose length is $2^{k}$, or $3\cdot 2^{k}$, for
some positive integer $k$.
\end{theorem}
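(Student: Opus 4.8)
The plan is to show that $G$ always contains a $4$-cycle (a $2^2$-cycle) unless it exhibits a very rigid local structure, and to produce the required cycle by hand in that rigid case. The starting point is the observation that every vertex of $G$ lies in a triangle: if $v$ has neighbours $a,b,c$, then claw-freeness forbids $\{a,b,c\}$ from being independent, so two of them, say $a,b$, are adjacent and $vab$ is a triangle. Thus $G$ has girth $3$ and is rich in triangles, and I will analyse the neighbourhood of a fixed triangle $T=xyz$.

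The first reduction is to find a $C_4$ whenever the triangle is not ``isolated''. If two vertices of $T$ share a neighbour $w\notin T$, then $w$ together with $T$ yields a $4$-cycle. If some outside-neighbour of $x$ is adjacent to some outside-neighbour of $y$, a $4$-cycle again appears using the edge $xy$. Ruling these out, claw-freeness applied at $x,y,z$ forces each outside-neighbourhood $A=N(x)\setminus T$, $B=N(y)\setminus T$, $C=N(z)\setminus T$ to be a clique, pairwise disjoint and with no edges between them; hence $\{x\}\cup A$, $\{y\}\cup B$, $\{z\}\cup C$ are cliques meeting only along the edges of $T$. A clique on at least four vertices contains a $C_4$, so I may assume each of these cliques has order at most three.

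In the surviving rigid case (say when every vertex has degree exactly three) each vertex lies in a unique triangle and the triangles are joined by single ``bridge'' edges, one at each vertex; equivalently $G$ is obtained from a cubic (multi)graph $H$ by replacing every vertex with a triangle. The key flexibility is this: given any cycle $D$ of length $m$ in $H$, I can route a cycle of $G$ through the $m$ triangles met by $D$, and in each such triangle I may traverse either the short path (length $1$) or the long path (length $2$) between the two incident bridges. Since the intermediate apex vertices are distinct, every such choice yields a \emph{simple} cycle, and its length is $m+\sum_{i=1}^m \ell_i$ with each $\ell_i\in\{1,2\}$. Hence $G$ contains a cycle of every length in the interval $[2m,3m]$.

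It remains a purely arithmetic check that $[2m,3m]$ always contains a member of $\{2^k\}\cup\{3\cdot 2^k\}$ with $k\ge 1$: beyond the value $4$, consecutive members of this set differ by a factor of at most $3/2$, so an interval of ratio $3/2$ cannot skip over all of them, and the small cases $m=1,2$ are checked directly (for instance $[2,3]\ni 2$ and $[4,6]\ni 4$). I expect the main obstacle to be the bookkeeping of the intermediate configurations rather than this clean rigid case: when the outside-cliques have order two, or when some vertex has degree larger than three, one must either locate a $C_4$ directly or argue that the triangle-replacement structure persists, all while making sure the cycles produced are genuinely simple and that one never lands on the single forbidden length $3=3\cdot 2^0$.
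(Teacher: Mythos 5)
Your argument has a genuine gap exactly where you flag one: the ``intermediate configurations'' are not bookkeeping, they are a structurally different case that your method does not reach. After ruling out a $C_4$, the neighbourhood of a vertex $v$ need not induce an edge plus an isolated vertex; it can also induce two disjoint edges, i.e.\ $v$ can have degree $4$ and lie in two triangles meeting only at $v$ (a bowtie). This is fully consistent with claw-freeness and $C_4$-freeness, and in such a graph there is no unique triangle per vertex, no perfect matching of bridge edges, and hence no cubic (multi)graph $H$ to contract to --- the whole ``rigid case'' machinery is unavailable. A concrete witness is the line graph of the Petersen graph (or of any triangle-free cubic graph of girth at least $5$): it is claw-free, $4$-regular, has no $C_4$, and every vertex lies in exactly two triangles, so it falls outside both branches of your case split and your proof produces no cycle for it at all. (Vertices of degree at least $5$ really are harmless: a neighbourhood on at least $5$ vertices with independence number at most $2$ must contain a path $a$--$b$--$c$, giving the $4$-cycle $vabcv$. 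The problem is precisely degree $4$ with neighbourhood a perfect matching.)

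The paper sidesteps this entirely by never attempting a global structural description of $G$. It takes a \emph{smallest hole} $C$ (a chordless cycle of length $s\geq 5$, which exists by a chord-splitting argument once $C_4$ is excluded) and uses claw-freeness together with the minimality of $C$ to show that every other edge of $C$ is uniquely triangulated, with distinct apex vertices off $C$; detouring through any subset of these apexes yields all cycle lengths in $[s,\frac{3}{2}s]$, and this interval always meets $\{2^k\}\cup\{3\cdot 2^k\}$. Your clique analysis of $N(x)\setminus T$ and your arithmetic about intervals of ratio $3/2$ are both correct, and your rigid case is essentially Proposition 6 and Corollary 7 of the paper (which hold only for \emph{cubic} claw-free graphs); the flaw is the unproved claim that excluding $C_4$ forces the triangle-replacement structure for general $\delta\geq 3$. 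To rescue your route you would need a separate argument for bowtie vertices and for mixtures of the two local types, and it is not clear that such an argument is any simpler than working with a minimal hole from the start.
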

To prove Theorem 1 we need the following lemma.
\begin{lemma}
Let $G$ be a graph with $\delta \geq 3$. If $G$ does not have
$C_4$ as a subgraph, then for some $n\geq 5$, it has an $n$-hole.
\end{lemma}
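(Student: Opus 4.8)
The plan is to argue by contradiction: assume $G$ has no $n$-hole for any $n \geq 5$, and derive that $G$ must contain a $C_4$, contrary to hypothesis. The key observation is that combining the two assumptions — no $C_4$ as a subgraph and no $n$-hole with $n \geq 5$ — forces $G$ to have no chordless cycle of length at least $4$ at all; in other words, $G$ is chordal (triangulated).

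First I would record an easy dichotomy via the girth. Let $g$ denote the girth of $G$. Since $\delta \geq 3$, the graph $G$ contains a cycle, so $g$ is finite, and the absence of $C_4$ forces $g \neq 4$. If $g \geq 5$, then a shortest cycle of $G$ is automatically chordless — any chord would produce a strictly shorter cycle, contradicting minimality — so this shortest cycle is itself an $n$-hole with $n = g \geq 5$, and we are done immediately. Hence the entire problem reduces to the case $g = 3$, in which triangles are present and we must still locate a hole.

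For the remaining case I would invoke the structure theory of chordal graphs. Under the contradiction hypothesis $G$ is chordal, and by Dirac's classical theorem every chordal graph possesses a simplicial vertex $v$, that is, a vertex whose neighborhood $N(v)$ induces a clique. Since $\delta \geq 3$ we have $|N(v)| \geq 3$, so $\{v\} \cup N(v)$ induces a complete graph on at least four vertices, i.e.\ contains a $K_4$. But $K_4$ already contains a $4$-cycle, so $G$ contains $C_4$ as a subgraph — the desired contradiction.

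The main obstacle is precisely the girth-$3$ case: the presence of triangles defeats the naive ``take a shortest cycle'' argument, and this is exactly where chordality does the real work. If one prefers to avoid quoting Dirac's theorem, the same conclusion can be reached constructively by starting from a triangle and repeatedly extending an induced path outward using the degree condition, while the $C_4$-freeness — equivalently, the fact that no two vertices share two common neighbors — guarantees that no short chords can appear; closing up such a maximal induced path then yields a hole of length at least $5$. I expect the clean route through simplicial vertices to be the most economical.
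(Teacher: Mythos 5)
Your proof is correct, but it takes a genuinely different route from the paper's. The paper argues by descent: it first cites the standard fact that $\delta\geq 2$ forces a cycle of length at least $\delta+1$, obtaining a cycle of length at least $5$ (length exactly $4$ being excluded by $C_4$-freeness), and then repeatedly cuts along chords; since neither piece created by a chord can be a $C_4$, at each step one obtains a strictly shorter cycle still of length at least $5$, and finiteness yields a chordless such cycle. You instead argue by contradiction: the absence of a $C_4$ subgraph rules out $4$-holes, so if there were also no $n$-hole with $n\geq 5$ the graph would be chordal; Dirac's theorem then supplies a simplicial vertex, whose closed neighbourhood contains a $K_4$ and hence a $C_4$ subgraph, a contradiction. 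Both arguments are sound (the key point that $K_4$ contains $C_4$ as a \emph{subgraph}, not induced, is exactly what the hypothesis forbids). Your version is shorter and pinpoints precisely where $\delta\geq 3$ enters (a simplicial vertex of degree at least $3$ yields $K_4$), at the cost of invoking the structure theory of chordal graphs; the paper's version is more self-contained, using only an elementary extremal fact and an explicit chord-cutting recursion, which also matches the constructive flavour of the rest of the paper. Your opening girth dichotomy is harmless but redundant, since the chordality argument subsumes it.
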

\begin{proof}
It is known that every graph with $\delta \geq 2$ contains a cycle
of length at least $\delta+1$ (see, e.g., \cite[Exercise
2.1.5]{BOMU}). Thus $G$ has a cycle $D_1$ of length $n_1\geq 5$.
If $n=5$, $D_1$ must clearly be chordless. If $n>5$, and $D_1$ has
no chord, we are finished, so suppose $D_1$ has a chord. The chord
separates $D_1$ into two shorter cycles, non of which have length
4, by assumption. Thus at least one of these two cycles, say
$D_2$, must have length 5$\leq n_2<n_1$. Since $G$ is finite, we
must by repeating this argument eventually find a chordless cycle
$D_k$ of length $n_k\geq$5.
\end{proof}

\begin{dnt}
We call an edge  of a graph {\it triangulated} if it is contained
in a triangle. Also if such a triangle is unique, we call the edge
{\it uniquely triangulated}.
\end{dnt}

Now we are ready to prove Theorem 1.

\vspace{2mm}
\begin{proof}[Proof of Theorem 1]
If $G$ has a cycle of length four,  the theorem holds, with $k=$2.
We may therefore assume that $G$ does not contain any $C_4$. Thus,
by Lemma 2, for some $n\geq 5$, $G$ has an $n$-hole. Let $C:\;
a_1a_2\ldots a_sa_1$, $s\geq 5$, be a smallest hole in $G$. Since
$\delta \geq 3$ and $C$ is a hole, each vertex of $C$ has a
neighbour in $G-V(C)$. For $i$, ($1\leq i \leq s$), suppose that
$a_ib_i \in E(G)$, where $a_i \in C$ and $b_i \in V(G)\setminus
V(C)$. Then either $a_{i-1}b_i \in E(G)$, or $a_{i+1}b_i \in
E(G)$, because $G$ is claw-free. Now we show that $b_i \not= b_j$
if $|j-i|\geq 2$. To get a contradiction, fix $i$ and let  $a_j$
be the first vertex of $C$ after $a_i$ such that $b_i = b_j=b$,
$|j-i|\geq 2$. If $j-i=2$, then we get the $C_4:\;
a_ia_{i+1}a_{i+2}ba_i$, which is absurd. If $|j-i|> 2$, then we
get the hole $a_{i+1}\ldots a_jba_{i+1}$ which is certainly
smaller than $C$ (note that we don't reject the case that this
hole may be a $C_4$).

 Therefore, it follows that  every other edge of $C$ is uniquely
 triangulated; we mark them.
  Moreover,  the third
vertices of the corresponding triangles are disjoint.  Note also
that $s$ is even. Consequently,  we find cycles of lengths
$s,s+1,\ldots, \frac{3}{2}s$ by traversing $C$ such that as we
reach a marked edge, we pass it directly or through  the third
vertex of its corresponding triangle. Since either there exists a
$2^k$ or a $3\cdot 2^{k-1}$ between $s$ and $\frac{3}{2}s$, the
proof is complete.
\end{proof}

 As mentioned above, Shauger \cite{SHA} proved the
Erd\H{o}s-Gy\'{a}rf\'{a}s conjecture for $K_{1,m}$-free graphs
having minimum degree at least $m+1$ or maximum degree at least
$2m-1$. Theorem 5 improves on the result of Shauger in claw-free
graphs. First we state the following proposition. We omit the easy
proof.

\vspace{2mm}

\begin{prop} In a {\normalfont 4}-regular claw-free graph
which does not contain $C_4$, every edge is uniquely triangulated.
\end{prop}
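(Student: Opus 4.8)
The plan is to analyze the local structure of the neighbourhood $N(v)$ of an arbitrary vertex $v$, which has exactly four vertices since $G$ is $4$-regular. First I would record two constraints on the induced subgraph $G[N(v)]$. The claw-free hypothesis forbids an independent set of size three inside $N(v)$ (such a triple together with $v$ would induce a $K_{1,3}$), so $\alpha(G[N(v)]) \le 2$. The $C_4$-free hypothesis, on the other hand, forces every vertex of $N(v)$ to have at most one neighbour inside $N(v)$: if some $a \in N(v)$ were adjacent to two distinct vertices $b,c \in N(v)$, then $vbac$ would be a $4$-cycle. Hence $G[N(v)]$ has maximum degree at most one, i.e.\ it is a matching on four vertices.

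Next I would combine these two facts. A matching on four vertices is either empty, a single edge, or a perfect matching; the first two possibilities leave an independent set of size three, violating $\alpha(G[N(v)]) \le 2$. Therefore $G[N(v)]$ is a perfect matching, say $N(v) = \{a,b,c,d\}$ with $ab$ and $cd$ the only edges induced inside it. Consequently each of the four edges incident to $v$ lies in a triangle: $va$ and $vb$ in the triangle $vab$, while $vc$ and $vd$ lie in the triangle $vcd$. Since every edge of $G$ is incident to some vertex, this already establishes that every edge is triangulated.

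Finally, for uniqueness I would argue that an edge $uv$ cannot lie in two distinct triangles. Two such triangles would supply two distinct common neighbours $w_1$ and $w_2$ of $u$ and $v$, and then $uw_1vw_2$ would be a $C_4$, contradicting the hypothesis. Hence each edge lies in exactly one triangle. I do not anticipate a genuine obstacle here; the only point requiring care is the short case analysis that rules out the empty and single-edge matchings, which is precisely the step where both hypotheses are invoked together.
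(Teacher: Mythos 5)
Your proof is correct; the paper explicitly omits the proof of this proposition as ``easy,'' and your argument (forcing $G[N(v)]$ to be a perfect matching on four vertices via the claw-free and $C_4$-free hypotheses, then getting uniqueness from the $C_4$ created by two common neighbours) is exactly the natural argument the authors had in mind. No gaps.
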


\begin{lemma}
Let $G$ be a {\normalfont 4}-regular claw-free graph which does
not contain $C_4$ and $v$ be a vertex of $G$. Let $C$ be a
smallest n-hole in $G$ containing $v$, $n\geq5$. Then for every
edge $xy$ of $C$, the third vertex $z=z(xy)$ of the corresponding
triangle of $xy$  is out of $C$. Furthermore, if $uw\not = xy$ are
two edges of $C$, then $z(uw)\not = z(xy)$.
\end{lemma}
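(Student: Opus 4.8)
The plan is to prove the two assertions separately, leaning on chordlessness of $C$ for the first, and on the minimality of $C$ together with $C_4$-freeness for the second. Throughout I write the hole as $C:\ c_1c_2\cdots c_nc_1$, and recall that by the preceding proposition every edge of $C$ has a well-defined unique third vertex.

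For the first claim, fix an edge $xy=c_ic_{i+1}$ with third vertex $z$. By definition $z\neq x,y$ while $z$ is adjacent to both. Suppose for contradiction that $z\in V(C)$. Since $C$ is a hole, the only vertices of $C$ adjacent to $c_i$ are its two cycle-neighbours $c_{i-1}$ and $c_{i+1}$; as $z\neq c_{i+1}=y$, this forces $z=c_{i-1}$. But then $c_{i-1}c_{i+1}\in E(G)$ is a chord of $C$ (and $n\geq 5$ guarantees $c_{i-1}\neq c_{i+1}$ are non-consecutive), contradicting that $C$ is a hole. Hence $z\notin V(C)$. I expect this step to be routine.

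For the second claim, suppose two distinct edges $uw\neq xy$ of $C$ share the same third vertex $z=z(uw)=z(xy)$. By the first part $z\notin V(C)$, and $z$ is adjacent to each of $u,w,x,y$; since $G$ is $4$-regular, $N(z)=\{u,w,x,y\}$. First I would dispose of the case that the two edges meet: two edges of a cycle share at most one vertex, and then they are consecutive, say $xy=c_ic_{i+1}$ and $uw=c_{i+1}c_{i+2}$. Now $z$ is adjacent to $c_i,c_{i+1},c_{i+2}$, so $c_ic_{i+1}c_{i+2}zc_i$ is a $4$-cycle on four distinct vertices, contradicting that $G$ contains no $C_4$.

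The remaining case of disjoint edges is the heart of the argument. Write $xy=c_pc_{p+1}$ and $uw=c_qc_{q+1}$ vertex-disjoint; they cut $C$ into two arcs $c_{p+1}\cdots c_q$ and $c_{q+1}\cdots c_p$. The idea is to short-cut each arc through $z$, forming $C'=c_{p+1}\cdots c_q\,z\,c_{p+1}$ and $C''=c_{q+1}\cdots c_p\,z\,c_{q+1}$. Since $N(z)=\{c_p,c_{p+1},c_q,c_{q+1}\}$, the vertex $z$ has no neighbour interior to either arc, so each arc being a chordless subpath of $C$, both $C'$ and $C''$ are chordless cycles, each strictly shorter than $C$. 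Every vertex of $C$ lies on $C'$ or on $C''$; in particular $v$ does, and the one containing $v$ is a shorter hole through $v$, contradicting minimality. The point needing care, and the main obstacle, is the degenerate situation where an arc is a single edge, i.e. $c_{p+1}c_q$ or $c_{q+1}c_p$ lies in $E(G)$, so that $C'$ or $C''$ is a triangle rather than a hole; but then $z$ is adjacent to four consecutive vertices of $C$, which again yields a $C_4$ (e.g. $c_pc_{p+1}c_{p+2}zc_p$). I expect verifying that $C'$ and $C''$ are genuinely chordless of length in $[4,n)$, and cleanly separating off these short-arc degeneracies, to be the only delicate bookkeeping.
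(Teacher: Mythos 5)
Your proof is correct and follows essentially the same route as the paper's: the first claim from chordlessness of $C$, consecutive shared edges killed by the $C_4$ $uwxzu$, and non-consecutive shared edges killed by shortcutting an arc of $C$ through $z$ to get a shorter hole through $v$, with the single-edge-arc degeneracy handled by an explicit $C_4$. Your version is somewhat more explicit than the paper's (in particular in using $4$-regularity to pin down $N(z)$ and verify chordlessness of the shortcut cycles), but the argument is the same.
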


\begin{proof}
First note that since $C$ is a hole, for every edge $xy$ in $C$,
$z=z(xy) \notin C$.  Let $uw$ and $wx$ be two consecutive edges in
$C$. If $z=z(uw)=z(wx)$, then we get the $C_4: \; uwxzu$. Hence
$z(uw)\not =z(wx)$. Suppose that $uw$ and $xy$ are two
non-consecutive edges in $C$ and suppose $C$ traverses the
vertices in order $u,w,x,y$, and then $v$. Let $Q$ be the $yvu$
segment of $C$. Now if $z=z(uw)=z(xy)$, then the cycle $uQyzu$ is
a smaller hole containing $v$; unless $u$ and $y$ are adjacent in
$C$ (and hence $v$ is one of them). But in this case, we see that
$uzxyu$ is a $C_4$ in $G$. This contradiction shows that
$z(uw)=z(xy)$ for $uw \not = xy$ is impossible.
\end{proof}

\begin{theorem}
Let $G$ be a claw-free graph with $\delta \geq 4$, which does not
contain $C_4$. Then every non-cut vertex of $G$ lies on a cycle
whose length is a power of \normalfont{2}.
\end{theorem}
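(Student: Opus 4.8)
The plan is first to pin down the local structure and reduce to the regular case. Since $G$ is claw-free with $\delta\ge 4$ and contains no $C_4$, I would show that $G$ is in fact $4$-regular and that the neighbourhood of every vertex induces two independent edges (a $2K_2$). Indeed, for any vertex $w$ the set $N(w)$ is triangle-free (a triangle in $N(w)$ together with $w$ would be a $K_4$, hence would contain a $C_4$) and has independence number at most $2$ (claw-freeness), so Ramsey's bound $R(3,3)=6$ gives $|N(w)|\le 5$; the value $5$ forces $N(w)\cong C_5$, which produces a $C_4$ through $w$, so $\deg(w)=4$. Among the triangle-free graphs on four vertices with independence number at most $2$, every one except $2K_2$ contains a $P_4$ or a $C_4$ and hence creates a $C_4$ with $w$, so $N(w)$ is exactly two independent edges. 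This reduction is what lets me invoke the preceding proposition and lemma, both of which are stated for $4$-regular graphs.

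Next I would produce a hole through the given non-cut vertex $v$. Write $N(v)=\{a_1,a_2,a_3,a_4\}$ with $a_1a_2,a_3a_4\in E(G)$ the only edges inside $N(v)$. Because $v$ is not a cut vertex, $G-v$ is connected, so it contains a path between the pair $\{a_1,a_2\}$ and the pair $\{a_3,a_4\}$; choose a shortest such path $Q$, say from $a_1$ to $a_3$. A shortest path is induced, and minimality forces the interior of $Q$ to avoid $N(v)$, since an interior occurrence of $a_2$, $a_3$ or $a_4$ would yield a shorter path between the two pairs. Hence $v$ together with $Q$ closes into an induced cycle $C$ on which $v$ is adjacent only to $a_1$ and $a_3$. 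Since $a_1a_3\notin E(G)$ and $G$ has no $C_4$, the path $Q$ can have neither length $1$ nor length $2$, so $C$ is a hole of length $n\ge 5$ containing $v$. Taking $C$ to be a smallest hole containing $v$ then places me exactly in the hypotheses of the preceding lemma.

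Finally I would exploit the detour vertices. By the preceding proposition every edge $xy$ of $C$ is uniquely triangulated, and by the preceding lemma the third vertices $z(xy)$ all lie outside $C$ and are pairwise distinct. Thus the $n$ detours that ``replace edge $xy$ by the path $x\,z(xy)\,y$'' are mutually independent: for any subset $S$ of the edges of $C$, detouring exactly at the edges of $S$ yields a genuine cycle through $v$ of length $n+|S|$, where adjacent detours cause no clash precisely because the third vertices are distinct and off $C$. Letting $|S|$ range over $0,1,\dots,n$ produces cycles through $v$ of every length in $\{n,n+1,\dots,2n\}$. Since any interval $[n,2n]$ contains a power of $2$ (the least power of $2$ that is $\ge n$ is strictly less than $2n$), one of these cycles has length a power of $2$, which completes the proof.

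The routine reduction to $4$-regularity and the final power-of-two counting are straightforward; the delicate point is the construction of the hole through $v$, which is the only place the non-cut hypothesis is used. Without it, $G-v$ could separate the two triangles at $v$, leaving $v$ on no hole at all. The other step needing care is the verification that all $n$ detours may be carried out simultaneously, and there the ``distinct and outside $C$'' conclusion of the preceding lemma is exactly what is required.
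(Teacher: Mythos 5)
Your proposal is correct and follows essentially the same route as the paper: reduce to the $4$-regular case, use the non-cut hypothesis to build a shortest hole through $v$ from a shortest path in $G-v$ between the two triangles at $v$, invoke the proposition and lemma on unique triangulation with distinct third vertices off the hole, and realize every length in $[n,2n]$, which contains a power of $2$. Your treatment is somewhat more detailed (the Ramsey argument for $4$-regularity and taking a smallest hole after establishing existence rather than arguing the constructed cycle is itself smallest), but these are refinements of the same argument, not a different approach.
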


\begin{proof}
Since $\delta \geq 4$ and $G$ is claw-free, if $G$ has a vertex
with degree at least 5, then this vertex lies on a $C_4$; so we
can assume that $G$ is 4-regular. Suppose that $v$ is a non-cut
vertex of $G$ and let $w$, $x$, $y$, and $u$ be its neighbours.
Hence, $G-v$ is connected. In view of $G$ is claw-free,  we can
assume that $wu, xy\in E(G)$. Let $P_1$, $P_2$, $P_3$, and $P_4$
be the shortest $wy$-path, $wx$-path, $xu$-path, and $yu$-path in
$G-v$, respectively. Also, without loss of generality assume that
$l(P_1)=\min \{l(P_1), l(P_2), l(P_3), l(P_4)\}$. The path $P_1$
together with the edges $vw$ and $vy$ make a cycle $C$. Clearly,
$l(P_1)>1$, otherwise $ywuvy$ will be a $C_4$. Therefore,
$l(C)=s\geq 5$. Since $P_1$ was the shortest path among $P_1$,
$P_2$, $P_3$, and $P_4$,  we see that neither $x$ nor $u$ are in
$P_1$ and, in fact, $C$ is the shortest non-triangle hole
containing the vertex $v$; for if $v$ lies on another non-triangle
shorter hole, then two of its neighbours would have distance less
than $l(P_1)$ in $G-v$.  By Lemma 4, each edge of $C$ is uniquely
triangulated  such that the
 third vertex of its corresponding triangle is not on $C$ and
 this correspondence is one to
one. Since $l(C)=s$, then $G$ contains cycles of lengths
$s,s+1,\ldots, 2s$. For, as in the proof of theorem 1, when we
traverse  the vertices of $C$, we can either pass the two ends of
every edge directly or through the third vertex of its
corresponding triangle.

This implies that $G$ has a cycle containing $v$ whose length is
$2^{k}$, for some $k\geq 3$.

\end{proof}

\section{The Erd\H{o}s-Gy\'{a}rf\'{a}s Conjecture in Cubic Claw-free Graphs}

In this section, we investigate the Erd\H{o}s-Gy\'{a}rf\'{a}s
conjecture in cubic claw-free graphs. Indeed, we discuss on the
cubic claw-free graphs for which the Erd\H{o}s-Gy\'{a}rf\'{a}s
conjecture possibly does not hold.

Suppose that $G$ is a cubic claw-free graph that does not contain
$C_4$. Let $v$ be an arbitrary vertex of $G$, and let its
neighbours be $x$, $y$, and $z$. Since $G$ is claw-free, so we can
assume that $xy\in E(G)$. Thus, $xz,yz\notin E(G)$; otherwise a
$C_4$ appears. Let $x_1$ and $y_1$ be respectively the other
neighbours of $x$ and $y$. Easily we see that $x_1\not=y_1$.
Therefore, for every vertex there exists a unique triangle
containing it, such that the other neighbours of its vertices are
distinct. Hence $G$ consists of some vertex-disjoint triangles
which are connected by a perfect matching of $G$. Furthermore, if
two vertices from two triangles are matched, then there is no more
link between these two triangles, again because we have no $C_4$
in $G$. This means if we look locally at the graph, we see a
triangle together with three appended edges, such that these edges
connect to three disjoint triangles. Now define $\hat(G)$ to be
the graph whose vertices are triangles of $G$ and two vertices are
adjacent in $\hat{G}$ whenever their corresponding triangles in
$G$ are linked by an edge. The graph $\hat{G}$ is then a simple
cubic graph. We can imagine $\hat{G}$ as a graph obtained from $G$
by shrinking each triangle to a vertex.

 Conversely, we can start from a simple cubic graph $\hat{G}$ and replacing
each vertex $v$ with a triangle $T$; linking the three vertices of
$T$ to the three triangles corresponding to the three neighbours
of $v$. This procedure results in a cubic claw-free graph $G$
without $C_4$. To sum up, we have the following proposition.

\begin{prop} The mapping $G\leftrightarrow \hat{G}$  is a one to one correspondence
between simple cubic graphs and simple cubic claw-free graphs
without $C_4$.
\end{prop}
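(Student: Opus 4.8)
The plan is to verify that the two constructions described above---contraction of triangles ($G\mapsto\hat{G}$) and triangle-expansion of vertices ($\hat{G}\mapsto G$)---are well defined on the stated classes and are mutually inverse. Most of the structural work has already been carried out in the discussion preceding the statement, so the proof is largely a matter of assembling those observations into the two verifications plus an inverse check.

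First I would confirm that the forward map lands in the right class. The preceding analysis shows that a cubic claw-free $G$ without $C_4$ decomposes into vertex-disjoint triangles joined by a perfect matching, with at most one matching edge between any two triangles (this last point being exactly the no-$C_4$ condition). Contracting each triangle to a single vertex therefore yields a graph $\hat{G}$ in which every vertex has degree three (one contribution from each of the three matching edges leaving a triangle), there are no loops (a matching edge joins two distinct triangles), and there are no parallel edges (at most one link between any pair of triangles). Hence $\hat{G}$ is simple and cubic.

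Next I would check the reverse map. Starting from a simple cubic $\hat{G}$ and replacing each vertex by a triangle, each new vertex acquires two edges inside its own triangle and one edge from the link corresponding to an incident edge of $\hat{G}$, so $G$ is cubic. It is claw-free because the three neighbours of any vertex consist of its two triangle-mates, which are adjacent, together with one matched vertex, so no three neighbours form an independent set. Finally $G$ contains no $C_4$: the only triangles are the inserted ones, and since $\hat{G}$ is simple no two triangles are joined by two links, so a four-cycle can arise neither inside a single triangle (too few vertices) nor across a pair of triangles (too few links), and any cycle meeting three or more triangles is longer than four.

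It then remains to show the two maps are mutually inverse up to isomorphism. Expanding $\hat{G}$ and re-contracting returns $\hat{G}$, since every inserted triangle contracts back to the vertex it came from and each link becomes the original edge; contracting $G$ and re-expanding returns $G$, since the canonical triangles of $G$ become the vertices of $\hat{G}$ and the matching edges become the links that are reinstated. I expect the one genuinely delicate point to be the \emph{canonicity} of the triangle decomposition of $G$---that is, that every vertex lies in exactly one triangle and that these are the only triangles present---since this is what guarantees that the contraction is independent of any choices and hence that the correspondence is a genuine bijection. This uniqueness is precisely what the argument preceding the statement extracts from claw-freeness together with the absence of $C_4$.
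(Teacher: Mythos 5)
Your proposal is correct and follows essentially the same route as the paper, which proves this proposition informally in the discussion immediately preceding it: the unique-triangle decomposition from claw-freeness and the absence of $C_4$, the perfect matching between triangles, and the converse expansion construction. You merely make explicit a few verifications the paper leaves to the reader (e.g., that the expanded graph has no $C_4$, and that the canonicity of the triangle decomposition is what makes the correspondence well defined).
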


\begin{cor}
If $\hat{G}$ contains a cycle of length $k$, then  this cycle
provides cycles of lengths $2k, 2k+1, \ldots, 3k$ in $G$.
\end{cor}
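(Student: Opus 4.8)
The plan is to lift the $k$-cycle of $\hat{G}$ to a closed walk in $G$ and then observe that, triangle by triangle, there is a binary choice that changes the length by exactly one, so that every length between the two extremes $2k$ and $3k$ is realized. This is exactly the traversal trick already used in the proofs of Theorems~1 and~5, now applied with the triangles of $G$ playing the role of the marked edges.

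First I would fix a cycle $v_1v_2\cdots v_kv_1$ in $\hat{G}$ and let $T_1,\ldots,T_k$ be the corresponding triangles in $G$. By the structure described just before Proposition~7, consecutive triangles $T_i$ and $T_{i+1}$ (indices taken mod $k$) are joined by exactly one matching edge; write $p_i$ for its endpoint in $T_i$ and $q_{i+1}$ for its endpoint in $T_{i+1}$. Since the three vertices of $T_i$ are matched to the three distinct neighbours of $v_i$ in $\hat{G}$, and $v_{i-1}\neq v_{i+1}$, the entry vertex $q_i$ (joined to $T_{i-1}$) and the exit vertex $p_i$ (joined to $T_{i+1}$) are distinct vertices of $T_i$; being in a triangle they are adjacent, and $T_i$ has a unique third vertex $r_i$.

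Next I would traverse the triangles in cyclic order. The $k$ matching edges $q_{i+1}p_i$ are forced and contribute $k$ to the total length. Inside each $T_i$ I may pass from $q_i$ to $p_i$ either directly along the edge $q_ip_i$ (length $1$) or along $q_ir_ip_i$ through the third vertex (length $2$). Choosing the detour in exactly $j$ of the $k$ triangles yields a closed walk of length $k+(k-j)+2j=2k+j$. Because the triangles are pairwise vertex-disjoint and the matching edges are distinct, no vertex is repeated, so this closed walk is a genuine cycle. Letting $j$ range over $0,1,\ldots,k$ produces cycles of every length $2k,2k+1,\ldots,3k$.

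The only routine point requiring care is that the lift is a simple cycle rather than merely a closed walk, and the one place this could fail is if the entry and exit vertices of some triangle coincided. The main thing to secure is therefore the distinctness of $p_i$ and $q_i$, which I expect to follow cleanly from the fact that distinct cycle-neighbours $v_{i-1}$ and $v_{i+1}$ force matchings to distinct vertices of $T_i$; once that is in place, the count $2k+j$ is immediate and the stated range of lengths follows.
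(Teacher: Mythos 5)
Your argument is correct and is essentially the same as the paper's: the paper observes that the subgraph of $G$ corresponding to $\hat{C}$ is a cycle of length $2k$ (your $k$ matching edges plus the $k$ edges $q_ip_i$) in which every other edge is triangulated, and then applies the same detour-counting traversal to realize every length from $2k$ to $3k$. Your additional verification that the entry and exit vertices of each triangle are distinct is a detail the paper leaves implicit, but it does not change the approach.
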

\begin{proof}
Consider a cycle $\hat{C}$ of length $k$ in $\hat{G}$. The
 subgraph $S$ of $G$ corresponding to $\hat{C}$ consists of a cycle of
 length $2k$ such that every other edge of it is triangulated. Hence
 we can find cycles of lengths $2k, 2k+1, \ldots, 3k$ in $S$.
\end{proof}

Based on proposition 6 and Corollary 7,  we think the following
conjecture is true.

\begin{con} Every cubic graph contains a cycle of length $l$ such that
$2l\leq 2^{k}<3l$, for some positive integer $k$.
\end{con}

\vspace{2mm} If this conjecture holds, it will lead to a proof of
the Erd\H{o}s-Gy\'{a}rf\'{a}s conjecture in cubic claw-free
graphs. Also note that this conjecture can be easily deduced from
the Erd\H{o}s-Gy\'{a}rf\'{a}s conjecture. But for simplicity, we
restrict ourselves to cubic graphs, and the length  of the desired
cycle has a very wide range.

At the end, we investigate minimal cubic claw-free graphs which
possibly have no cycle with length a power of 2.

\begin{theorem}
Any counterexample to the Erd\H{o}s-Gy\'{a}rf\'{a}s conjecture in
cubic claw-free graphs must have at least \normalfont{114}
vertices.
\end{theorem}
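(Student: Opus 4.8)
The plan is to leverage the correspondence established in Proposition 6. By Corollary 7, a cycle of length $k$ in $\hat{G}$ yields cycles of all lengths between $2k$ and $3k$ in $G$. A power of $2$ fails to appear among these lengths precisely when the interval $[2k,3k]$ contains no power of $2$, i.e. when $k$ satisfies $2^{m-1} < 2k$ and $3k < 2^m$ for the relevant $m$ (together with the boundary cases). So if $G$ is a counterexample, then \emph{every} cycle length $k$ occurring in $\hat{G}$ must avoid these ``good'' ranges; concretely, $\hat{G}$ must have no cycle of length $k$ for which $[2k,3k]$ straddles a power of $2$. First I would tabulate, for each power $2^m$, the forbidden cycle lengths $k$ in $\hat{G}$ that would immediately furnish a two-power cycle in $G$: for example $k=2$ gives $[4,6]$ (catching $2^2$), $k=3,4$ give ranges catching $2^3$, $k=6,7,8$ catch $2^4$, $k=11,\dots,16$ catch $2^5$, and so on. The complement of this forbidden set is the set of cycle lengths $\hat{G}$ is \emph{allowed} to have.

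Next I would translate the ``counterexample'' hypothesis into a strong structural restriction on $\hat{G}$: it is a simple cubic graph whose cycle spectrum (set of cycle lengths) is confined to the allowed set. Since $\hat{G}$ is cubic, it has girth at least $3$ and certainly contains short cycles; in particular the allowed small lengths are very sparse (one checks $k=5,9,10$ and the next allowed band around $k=17,\dots,21$, etc.). I would then combine two ingredients to force many vertices. One ingredient is a lower bound on the number of triangles in $G$ versus $\hat{G}$: since $|V(G)| = 3\,|V(\hat{G})|$ (each vertex of $\hat{G}$ blows up to a triangle), a bound of the form $|V(\hat{G})| \geq 38$ would already give $|V(G)| \geq 114$. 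So the crux reduces to proving that a simple cubic graph whose cycle spectrum avoids all the forbidden lengths listed above must have at least $38$ vertices.

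To prove that cubic-graph bound I would argue by a combination of girth and counting. If $\hat{G}$ avoids lengths $3$ and $4$, its girth is at least $5$; the forbidden ranges actually exclude $3,4,6,7,8,11,\dots,16$, so the girth must be at least $5$ and moreover short cycles are heavily constrained. I would invoke the Moore bound for cubic graphs together with a careful case analysis on which allowed girth value $g$ the graph realizes: for each admissible $g$ I would count vertices forced by the breadth-first expansion around a shortest cycle, and check that when the girth is pushed high enough to avoid the dense forbidden bands, the Moore-type lower bound already exceeds $38$; when the girth is small (say $5$), I would instead use the fact that the \emph{next} cycle lengths are also forbidden, so two nested short cycles cannot coexist without creating a forbidden length, again forcing the graph to be large. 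The main obstacle I expect is this last step: ruling out all the small cubic graphs whose cycle spectrum happens to land entirely in the allowed gaps. Unlike the clean Moore-bound regime, the low-girth cases require showing that avoiding a whole consecutive band of forbidden lengths (for instance $11$ through $16$) is incompatible with being a small cubic graph, and this is essentially a finite but delicate case check — quite possibly the place where a computer search (in the spirit of Markstr\"{o}m's argument) is the cleanest way to close the remaining finitely many configurations.
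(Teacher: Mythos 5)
Your reduction is exactly the paper's: pass to the cubic graph $\hat{G}$ via Proposition 6, observe via Corollary 7 that a cycle of length $l\in\{2,3,4,6,7,8\}$ in $\hat{G}$ already yields a two-power cycle in $G$, and then try to show that a cubic graph avoiding those cycle lengths has at least $38$ vertices, whence $|V(G)|=3|V(\hat{G})|\geq 114$. The gap is that you never actually establish the bound $|V(\hat{G})|\geq 38$, and the tools you propose for it do not suffice. Since $l=5$ gives the interval $[10,15]$, which contains no power of $2$, $5$-cycles are \emph{allowed} in $\hat{G}$; so its girth may be as low as $5$, and the Moore bound for cubic graphs of girth $5$ is only $10$ (attained by the Petersen graph), nowhere near $38$. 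You correctly identify the low-girth case as the obstacle, but you resolve it only by gesturing at ``two nested short cycles cannot coexist'' and then explicitly deferring to a finite case check or a computer search. That is precisely the part of the theorem that needs a proof, so as written the argument is incomplete.

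For comparison, the paper closes this step by hand with a breadth-first expansion from a single vertex $v_0$ of $\hat{G}$, not from a shortest cycle, and it uses only the forbidden lengths up to $8$ (it never needs your bands $11,\dots,16$). Writing $L_i$ for the set of vertices at distance $i$ from $v_0$, the absence of cycles of lengths $2,3,4,6,7,8$ forces: $L_1$ independent with $|L_1|=3$; no two vertices of $L_1$ sharing a neighbour in $L_2$, so $|L_2|=6$ with at most one edge inside $L_2$; correspondingly few edges inside $L_3$ and no two $L_3$-vertices sharing an $L_4$-neighbour. Summing the levels in the worst case gives $1+3+6+10+18=38$. If you want to complete your proposal, this level-by-level count is the concrete argument you are missing; the Moore-bound and girth-case framework you set up does not by itself produce any bound close to $38$.
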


\begin{proof}
Let $G$ be a claw-free cubic graph of order $3n$. Then ${\hat{G}}$
(defined in proposition 6) is a cubic graph of order $n$. By
corollary 7, if ${\hat{G}}$ contains a cycle of length $l$, where
$l\in \{2,3,4,6,7,8\}$, then the Erd\H{o}s-Gy\'{a}rf\'{a}s
conjecture holds for $G$. So let us assume that ${\hat{G}}$ does
not contain such cycles. Let $v_0$ be a vertex of ${\hat{G}}$. We
consider $\{v_0\}$ as level 0, and define level $i$, $i\geq 1$, as
the set
$$L_i=\{v\in V({\hat{G}}) \; :\;\;d(v,v_0)=i\}.$$
Clearly, $L_1$ is an independent set. It is easy to see that the
subgraph induced by $L_2$ has at most one edge. One can check that
if the subgraph induced by $L_2$ has no edge, then the subgraph
induced by $L_3$ has at most three edges, and if the subgraph
induced by $L_2$ has one edge, then the subgraph induced by $L_3$
has at most one edge. No two elements of $L_3$ have common
neighbours in $L_4$, because otherwise, ${\hat{G}}$ contains the
cycles of lengths 2, 4, 6, or 8. An easy calculation shows that
${\hat{G}}$ has at least 38 vertices. Consequently, any
counterexample for the Erd\H{o}s-Gy\'{a}rf\'{a}s conjecture must
have at least $3\times 38=114$ vertices.
\end{proof}

\begin{center}{\bf{Acknowledgments}}
\end{center}
The authors would like to thank anonymous referees for helpful
mathematical and grammatical comments.


\begin{thebibliography}{99}

\bibitem{BON} J. A. Bondy, Extremal problems of Paul Erd\H{o}s on circuits in
graphs, In \textit{Paul Erd\H{o}s and his Mathematics, II\/},
Bolyai Soc. Math. Stud., 11, Jnos Bolyai Math. Soc., Budapest
(2002), 135--156.

\bibitem{BOMU} J. A. Bondy and U.S.R. Murty,  \textit{Graph Theory\/}, Springer-Verlag, New
York (2008).

\bibitem{DASH} D. Daniel and S. E. Shauger, A result on the
Erd\H{o}s-Gy\'{a}rf\'{a}s conjecture in planar graphs, {\it Congr.
Numer.\/} {\bf 153} (2001), 129--140.

\bibitem{ERD} P. Erd\H{o}s, Some old and new problems in various branches of
combinatorics, {\it Discrete Math.\/}, {\bf 165/166} (1997),
227--231.

\bibitem{MAR} K. Markstr\"{o}m, Extremal graphs for some problems on
cycles in graphs, {\it Congr. Numer.\/}, {\bf 171} (2004),
179–-192.

\bibitem{SAL} P. Salehi Nowbandegani and H. Esfandiari, An
experimental result on the Erd\H{o}s-Gy\'{a}rf\'{a}s conjecture in
bipartire graphs. 14th Workshop on graph theory (CID), September
18-23, 2011, Szklarska Poreba, Poland.

\bibitem{SHA} S. E. Shauger, Results on the Erd\H{o}s-Gy\'{a}rf\'{a}s conjecture in
$K_{1,m}$-free graphs, {\it Congr. Numer.\/} {\bf 134} (1998),
61--65.

\bibitem{VER} J. Verstra\"{e}te, Unavoidable cycle lengths in graphs, {\it J. Graph Theory\/}, {\bf 49}(2) (2005), 151--167.

\end{thebibliography}
\end{document}